\newtheorem{theorem}{\bf Theorem}[section]
\newtheorem{lemma}[theorem]{\bf Lemma}
\newtheorem{corollary}[theorem]{\bf Corollary}
\title{ On residually finite groups satisfying an Engel type identity} \thanks{Dedicated to Pavel Shumyatsky on the occasion of his 60th birthday}
\author[D.  Silveira ]{Danilo Silveira}
\address{Department of Mathematics, Federal University of Goi\'as, 75704-020 Catal\~ao GO, Brazil}
\email{sancaodanilo@gmail.com}
\keywords{Engel element, Engel groups, Residually finite groups, Locally graded groups, Lie Algebras}
\subjclass[2010]{20F45, 20E26, 20F40}
\begin{document}
	
	\maketitle
	
	\begin{abstract}
		Let $ n, q $ be positive integers. We show that if $ G $ is a finitely generated residually finite group satisfying the identity $ [x,_ny^q]\equiv 1, $ then there exists a function $ f(n) $ such that $ G $ has a nilpotent subgroup of finite index of class at most $ f(n) $.   We also extend this result to locally graded groups.		
	\end{abstract}

	\section{Introduction}
	\label{intro}
	
	Let $ n $ be a positive integer. We say that a group $G$  is (left) $ n $-Engel if it satisfies the identity $[y,{}_n\,x]\equiv 1$, where the word $ [x,_n y] $ is  defined inductively by the rules

	\begin{center}
		$ [x,_1y]=x^{-1}y^{-1}xy,\ \ \  [x,_ny]=[[x,_{n-1}y],y ]  $\ \ \ for all $ n\geq 2. $
	\end{center}

	A important theorem of Wilson \cite[Theorem 2]{W} says that finitely generated  residually finite $ n $-Engel  groups are nilpotent. More specific properties of  residually  finite $n$-Engel groups can be found for example in a theorem of Burns and Medvedev (quoted below as Theorem \ref{BM-thm}) stating that there exist functions  $ c(n) $ and $ e(n) $ such that any residually finite $ n $-Engel group $ G $ has a nilpotent normal subgroup $ N $ of class at most $ c(n) $ such that the quotient group $ G/N $ has exponent dividing $ e(n) $. The interested reader is referred to the survey \cite{T} and references therein for further results on finite and residually finite Engel groups.
	The purpose of the present article is to provide the proof for the following theorem.
	
	\begin{theorem}\label{BM}
		Let $ G $ be a finitely generated residually finite group satisfying the identity $ [x,_ny^q]\equiv 1. $ Then there exists a function $ f(n) $ such that $ G $ has a nilpotent subgroup of finite index of class at most $ f(n) $.
	\end{theorem}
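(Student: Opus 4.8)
The plan is to reduce everything to finite quotients and then to run a two–stage argument: first produce a normal subgroup of finite index whose finite quotients are nilpotent (virtual nilpotency), and then sharpen the nilpotency class to a bound depending on $n$ alone by passing to a characteristic–zero Lie algebra where the factor $q^{n}$ becomes invertible. First I would record what the identity says in a finite quotient $\bar G=G/N$. For every $\bar y$ the element $\bar y^{\,q}$ satisfies $[\bar x,_n\bar y^{\,q}]=1$ for all $\bar x$, so $\bar y^{\,q}$ is a (bounded) left $n$-Engel element of $\bar G$; by Baer's theorem the left Engel elements of a finite group lie in the Fitting subgroup, whence $\bar y^{\,q}\in F(\bar G)$. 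Thus the verbal subgroup $\bar G^{\,q}$ is contained in $F(\bar G)$ and $\bar G/F(\bar G)$ has exponent dividing $q$. Letting $M_N$ be the preimage of $F(G/N)$ and $M=\bigcap_N M_N$, the group $G/M$ embeds into a product of quotients of exponent dividing $q$, so it is finitely generated, residually finite and of finite exponent, hence finite by Zelmanov's positive solution of the restricted Burnside problem. Therefore $M$ has finite index in $G$, and each image $M/(M\cap N)$ is a subgroup of the nilpotent group $F(G/N)$; since $\bigcap_N(M\cap N)=1$, it suffices to bound the nilpotency class of these images in order to bound the class of $M$ itself.

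For the first stage I would bound the class of each finite nilpotent quotient of $M$ by a function $h(n,d)$ of $n$ and the number $d$ of generators, using the associated Lie ring method. Working one prime at a time inside $F(\bar G)=\prod_p O_p(\bar G)$, for a finite $p$-section the Engel condition on $q$-th powers passes to the associated $\mathbb F_p$-Lie algebra, which is generated by boundedly many ad-nilpotent elements and satisfies a polynomial identity; Zelmanov's theorem on such Lie algebras yields nilpotency with an $(n,d)$-bounded class, and the Lie ring correspondence transfers this back to the group uniformly in $p$. Having bounded every $M/(M\cap N)$ by $h(n,d)$, I conclude $\gamma_{h(n,d)+1}(M)\le\bigcap_N(M\cap N)=1$, so $M$ is nilpotent; in particular $G$ is virtually nilpotent.

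In the second stage I upgrade the class bound to depend on $n$ only. Since $M$ is a finitely generated nilpotent group, its torsion subgroup is finite and $M$ has a torsion-free nilpotent subgroup $M_0$ of finite index, to which I attach its Mal'cev Lie algebra $L_0$ over $\mathbb Q$. Here the identity is decisive: for $x,y\in M_0$ the relation $[x,_n y^{q}]=1$ translates, via $\log(y^{q})=q\log y$, into $q^{n}(\operatorname{ad}\log y)^{n}=0$ on $L_0$, and because $L_0$ is a $\mathbb Q$-algebra the factor $q^{n}$ can be cancelled, giving $(\operatorname{ad}\log y)^{n}=0$ for all $y$. As the images $\log y$ generate $L_0$, this forces $L_0$ to be an $n$-Engel Lie algebra over $\mathbb Q$, which by Zelmanov's characteristic-zero theorem is nilpotent of class bounded by some $f(n)$ \emph{independent of $q$ and of $d$}. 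Consequently $M_0$ is nilpotent of class at most $f(n)$, and being of finite index in $G$ it is the required subgroup.

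I expect the main obstacle to be the first stage, and within it the primes dividing $q$: for such primes $\bar y^{\,q}$ sits deep in the lower central series, so the Engel hypothesis carries almost no information at the level of the mod-$p$ Lie ring, and these sections must be controlled through the restricted Burnside bound and the Lie-theoretic machinery rather than directly. The characteristic-zero cancellation of $q^{n}$ in the second stage is exactly what removes the dependence on $q$ (and on the number of generators) from the final class bound, so the delicate point is to arrange the first stage so that virtual nilpotency is secured at all before the sharpening is applied.
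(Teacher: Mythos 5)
Your overall architecture --- pass to a normal subgroup of finite index that is residually (finite nilpotent), then use Lie methods to bound its class independently of $q$ --- is in the spirit of the paper, and two of your three stages are sound. The opening reduction via Baer's theorem (so that $G/M$ has exponent dividing $q$ and is finite by the restricted Burnside problem) is a clean substitute for the paper's appeal to \cite{BSTT} and Lemma~\ref{HP}; and your characteristic-zero Mal'cev argument in the second stage, cancelling $q^{n}$ over $\mathbb{Q}$ and invoking Zelmanov's theorem on $n$-Engel Lie algebras in characteristic zero, is a legitimate way to obtain a bound depending on $n$ alone --- it is essentially the mechanism behind the Burns--Medvedev result quoted as Theorem~\ref{BM-thm}.

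The genuine gap is in your first stage, and it sits exactly at the crux of the theorem. For a prime $p$ dividing $q$ you must use the Zassenhaus ($p$-dimension central) filtration, since that is the filtration for which Lazard's lemma (Lemma~\ref{lazard-ad}, Corollary~\ref{lemma-lazard}) makes the relevant elements ad-nilpotent; Zelmanov's Theorem~\ref{1} then makes the algebra $L_p$ nilpotent. But your sentence ``the Lie ring correspondence transfers this back to the group uniformly in $p$'' is where the argument breaks: nilpotency of $L_p(P)$ of class $c$ does \emph{not} imply that $P$ is nilpotent of class $c$, nor of any $(c,d)$-bounded class. What it yields (Theorem~\ref{3}) is only that the pro-$p$ completion is $p$-adic analytic, i.e.\ of finite rank and virtually powerful --- and bounded rank does not bound nilpotency class (dihedral $2$-groups have rank $2$ and unbounded class). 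The missing conversion from ``virtually powerful'' to ``bounded class'' is exactly what the paper supplies through Lemma~\ref{main-powerful}: inside a powerful open subgroup $K$, the subgroup $K^{q}$ is a powerful $n$-Engel pro-$p$ group, hence of class at most $s(n)$ by the Abdollahi--Traustason result (Lemma~\ref{AG}), while $K/K^{q}$ is finite by Zelmanov's theorem on periodic profinite groups. Without this step (or the Burns--Medvedev theorem for the primes not dividing $q$), you never establish that $M$ is nilpotent, so your second stage has nothing to act on. You correctly flag the primes dividing $q$ as the main obstacle, but ``controlled through the restricted Burnside bound and the Lie-theoretic machinery'' is a placeholder for precisely the argument that has to be given.
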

	
	A group is called locally graded if every non-trivial finitely generated subgroup has a proper subgroup of finite index. The class of locally graded
	groups contains locally (soluble-by-finite) groups as well as residually finite groups. We can extend the  Theorem \ref{BM} to the class
	of locally graded groups.

	\begin{corollary}\label{cor}
		Let $ G $ be a finitely generated locally graded group satisfying the identity $ [x,_ny^q]\equiv 1. $ Then there exists a function $ f(n) $ such that $ G $ has a nilpotent subgroup of finite index  of class at most $ f(n) $.
	\end{corollary}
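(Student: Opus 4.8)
The strategy I would follow is to reduce the locally graded case to the residually finite one: I claim it suffices to show that a finitely generated locally graded group $G$ satisfying $[x,_ny^q]\equiv 1$ is residually finite, for then Theorem \ref{BM} applies to $G$ verbatim and produces the nilpotent subgroup of finite index of class at most $f(n)$. Thus the whole task becomes proving that the finite residual $R$ of $G$ (the intersection of all subgroups of finite index) is trivial.

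The first real step is to extract uniform data from the finite quotients. Every finite quotient $Q$ of $G$ again satisfies the identity and is residually finite, so Theorem \ref{BM}, combined with the Burns--Medvedev theorem (Theorem \ref{BM-thm}) applied to the relevant power structure, yields constants $c=c(n)$ and $e=e(n,q)$ such that each finite quotient is (nilpotent of class at most $c$)-by-(exponent dividing $e$). Writing $G^{e}$ for the verbal subgroup generated by $e$-th powers and setting $V=\gamma_{c+1}(G^{e})$, I would verify that $V$ maps to $1$ in every finite quotient, so that $V\le R$, while the quotient $G/G^{e}$ is a finitely generated group of exponent dividing $e$; this finite-exponent quotient is the only genuine obstruction left.

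The next step kills that obstruction. Invoking the theorem, a consequence of Zelmanov's solution of the restricted Burnside problem, that a finitely generated locally graded group of finite exponent is finite, I would argue that $G/G^{e}$ is finite, so $G^{e}$ has finite index in $G$. Since $G^{e}/V$ is nilpotent of class at most $c$, the group $G/V$ is then nilpotent-by-finite, hence residually finite; comparing finite residuals gives $R\le V$, so in fact $R=V=\gamma_{c+1}(G^{e})$.

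The main obstacle is the final reduction $R=1$, equivalently the nilpotency of the finitely generated (finite-index, hence locally graded) subgroup $G^{e}$. Here local gradedness cannot be applied to $R$ directly, since the finite residual need not be finitely generated, and one cannot simply pass to the quotient $G/G^{e}$ because local gradedness is not inherited by quotients. To close this gap I would exploit the identity $[x,_ny^q]\equiv 1$ once more on $G^{e}$, aiming to place the relevant sections of $R$ within the scope of Hall's residual finiteness theorem for finitely generated abelian-by-nilpotent groups (or to rerun the local-gradedness argument inside genuinely finitely generated subgroups); carrying out this last bookkeeping cleanly is, I expect, the delicate heart of the proof.
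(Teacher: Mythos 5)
Your proposal is incomplete, and the missing ingredient is exactly the one the paper's proof is built on. The first concrete gap is the step ``$G/G^{e}$ is finite'': to invoke the consequence of the Restricted Burnside Problem that finitely generated locally graded groups of finite exponent are finite, you need $G/G^{e}$ to be locally graded, and (as you yourself observe a few lines later) quotients of locally graded groups need not be locally graded. The paper resolves this by quoting Theorem C of Bastos--Shumyatsky--Tortora--Tota, which asserts that in a locally graded group satisfying $[x,_ny^q]\equiv 1$ the subgroup $H=G^{q}$ is \emph{locally nilpotent}; Lemma \ref{HP} then guarantees that $G/H$ is locally graded, so the Burnside argument makes $G/H$ finite, and $H$, being finitely generated and locally nilpotent, is nilpotent, hence polycyclic, hence residually finite, so Theorem \ref{BM} applies to $H$ verbatim. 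Your argument never brings in this local nilpotence result, and without it neither the finiteness of $G/G^{e}$ nor the final reduction can be carried out.

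There are two further problems. Your uniform claim that every finite quotient $Q$ is (nilpotent of class at most $c$)-by-(exponent dividing $e$), so that $\gamma_{c+1}(G^{e})$ dies in all finite quotients, is not justified by the cited results: Theorem \ref{BM} produces \emph{some} nilpotent subgroup of finite index of class at most $f(n)$ with no control on its index or on whether it contains $Q^{e}$, and Theorem \ref{BM-thm} applies to $n$-Engel groups, whereas $Q^{q}$ is only generated by left $n$-Engel elements and need not itself be $n$-Engel. More seriously, you explicitly leave the decisive step $R=1$ open, describing it as ``the delicate heart of the proof''; since everything hinges on that step, the proposal does not constitute a proof. The repair is to abandon the attempt to prove residual finiteness of $G$ from its finite quotients and instead follow the paper's route: establish nilpotence of $G^{q}$ via the BSTT theorem and apply Theorem \ref{BM} to that finite-index subgroup.
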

	
	In the next section we describe the Lie-theoretic machinery that will be used in the proof of Theorem \ref{BM}. The proof of the theorem and of the corollary is given in Section 3.

	\section{About Lie algebras}
	
	Let $L$ be a Lie algebra over a field $K$ and  $X$  a subset of $L$. By a commutator in elements of $X$ we mean any element of $L$ that can be obtained as a Lie product of elements of $X$ with some system of brackets. If $x_1,\ldots,x_k,x, y$ are elements of $L$, we define inductively 
	$$[x_1]=x_1; [x_1,\ldots,x_k]=[[x_1,\ldots,x_{k-1}],x_k]$$
	and 
	$[x,_0y]=x; [x,_my]=[[x,_{m-1}y],y],$ for all positive integers $k,m$.  
	As usual, we say that an element $a\in L$ is ad-nilpotent if there exists a positive integer $n$ such that $[x,_na]=0$ for all $x\in L$. 
	Denote by $F$ the free Lie algebra over $K$ on countably many free generators $x_1,x_2,\ldots$. Let $f=f(x_1,x_2,\ldots,x_n)$ be a non-zero element of $F$. The algebra $L$ is said to satisfy the identity $f \equiv 0$ if $f(l_1,l_2,\ldots,l_n) = 0$ for any $l_1,l_2,\ldots,l_n\in L$.
	
	The next theorem represents the most general form of the Lie-theoretical part of the solution of the Restricted Burnside Problem \cite{Z1,Z0,Z16}. It was announced by Zelmanov in \cite{Z1}. A detailed proof can be found in \cite{Z16}.

	\begin{theorem}\label{1}
		Let $L$ be a Lie algebra over a field and suppose that $L$ satisfies a polynomial identity. If $L$ can be generated by a finite set $X$ such that every commutator in elements of $X$ is ad-nilpotent, then $L$ is nilpotent.
	\end{theorem}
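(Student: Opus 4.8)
The plan is to deduce the nilpotency of $L$ from its local nilpotency: since $L$ is finitely generated it is a finitely generated subalgebra of itself, so it suffices to prove that every finitely generated subalgebra of $L$ is nilpotent. The engine for this is Kostrikin's theory of \emph{sandwich} elements. Recall that $c\in L$ is a sandwich if $[x,_2c]=0$ and $[x,c,y,c]=0$ for all $x,y\in L$, equivalently $(\operatorname{ad} c)^2=0$ and $\operatorname{ad}c\,\operatorname{ad}y\,\operatorname{ad}c=0$. I would rest the argument on two results. The first is the Kostrikin--Zelmanov theorem that a Lie algebra generated by finitely many sandwiches is nilpotent, of class bounded in terms of the number of generators. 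The second is a \emph{sandwich production} principle: in a nonzero Lie algebra that satisfies a polynomial identity and is generated by ad-nilpotent elements, every nonzero ideal contains a nonzero sandwich.

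Granting these two facts, the skeleton is short. First I would reduce to the case of an infinite ground field by extending scalars, which is harmless for nilpotency and permits free linearization of identities. Then let $S$ be the ideal of $L$ generated by all sandwich elements. Every finitely generated subalgebra $H\le S$ is contained in the subalgebra generated by finitely many sandwiches, since each generator of $H$ is a Lie polynomial involving only finitely many sandwiches; hence $H$ is nilpotent by the Kostrikin--Zelmanov theorem, and therefore $S$ is locally nilpotent. Now pass to $L/S$: it is again a finitely generated Lie algebra satisfying a polynomial identity, generated by the images of $X$ with all commutators still ad-nilpotent, and by construction it has no nonzero sandwich. The sandwich production principle then forces $L/S=0$, i.e.\ $L=S$. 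Thus $L$ is locally nilpotent, and being finitely generated it is nilpotent.

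The genuine difficulty, and the step I expect to dominate the work, is the sandwich production principle. Here one works inside the relatively free algebra on the generators and combines two families of multilinear identities: the full linearizations of the ad-nilpotency relations $(\operatorname{ad} u)^m=0$ for the commutators $u$ in $X$, and the multilinearization of the given identity $f\equiv 0$. The objective is to show that, unless the algebra vanishes, iterated specialization of these linearized relations produces a nonzero element $c$ satisfying both sandwich conditions. This is established by an induction whose parameters are the degree of the polynomial identity and the ad-nilpotency indices, and it constitutes the combinatorial heart of Zelmanov's solution of the Restricted Burnside Problem; for the technical execution I would follow the detailed development in \cite{Z16}. Once both black boxes are in place, the reduction above closes the argument and yields nilpotency of $L$.
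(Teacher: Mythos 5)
You should first know that the paper contains no proof of Theorem \ref{1} to compare against: it is quoted as the Lie-theoretic core of Zelmanov's solution of the Restricted Burnside Problem, announced in \cite{Z1} with the detailed proof in \cite{Z16}. So the only meaningful benchmark is Zelmanov's own development, and your sketch does track its genuine contour (sandwich elements, the Kostrikin--Zelmanov nilpotency theorem, a radical-and-quotient reduction, finite generation upgrading local nilpotency to nilpotency). However, two of the steps you actually spell out are wrong as stated, and the step you black-box is the theorem itself.

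Concretely: (i) your claim that every finitely generated subalgebra of $S$, the ideal generated by all sandwiches, lies in a subalgebra generated by finitely many sandwiches is false. Elements of $S$ are sums of commutators $[c,x_1,\dots,x_k]$ with the $x_i$ arbitrary in $L$, not Lie polynomials in sandwiches alone; already $[c,x]$ need not be a sandwich, since for a sandwich $c$ one computes $(\operatorname{ad}[c,x])^2=-\operatorname{ad}c\,(\operatorname{ad}x)^2\,\operatorname{ad}c$, which has no reason to vanish (one only gets $(\operatorname{ad}[c,x])^3=0$). That a sandwich of a PI Lie algebra generates a \emph{locally nilpotent ideal} is itself a substantive theorem in \cite{Z16}, not a corollary of the finite-generation theorem you quote. (ii) Your assertion that $L/S$ ``by construction has no nonzero sandwich'' is also false: quotients create new sandwiches, i.e.\ an element can satisfy the sandwich identities modulo $S$ without being the image of any sandwich of $L$. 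This is exactly why the actual argument runs through the transfinitely iterated Kostrikin radical ($K_1$ the ideal generated by sandwiches, $K_{\alpha+1}$ the preimage of $K_1(L/K_\alpha)$, unions at limit ordinals), and why one must prove separately, using the PI hypothesis, that this radical is locally nilpotent --- local nilpotency of Lie algebras is not closed under extensions, so it does not propagate up the chain for free. Finally, the ``sandwich production principle'' is not an auxiliary input: together with the radical theory it \emph{is} the theorem, and deferring it to \cite{Z16} leaves your proposal as a reading guide to the same citation the paper itself relies on, rather than an independent proof.
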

	
	\subsection{Associating a Lie ring to a group}

	Let $G$ be a group. A series of subgroups $$G=G_1\geq G_2\geq\dots\eqno{(*)}$$ is called an $N$-series if it satisfies $[G_i,G_j]\leq G_{i+j}$ for all $i,j\geq 1$.  Obviously any $N$-series is central, i.e. $G_i/G_{i+1}\leq Z(G/G_{i+1})$ for any $i$.  Let $p$ be a prime. An $N$-series is called $N_p$-series if $G_i^p\leq G_{pi}$ for all $i$. Given an $N$-series $(*)$, let $L^*(G)$ be the direct sum of the abelian groups $L_i^*=G_i/G_{i+1}$, written additively. Commutation in $G$ induces a binary operation $[\cdot ,\cdot]$ in $L^*(G)$. For homogeneous elements $xG_{i+1}\in L_i^*,yG_{j+1}\in L_j^*$ the operation is defined by $$[xG_{i+1},yG_{j+1}]=[x,y]G_{i+j+1}\in L_{i+j}^*$$ and extended to arbitrary elements of $L^*(G)$ by linearity. It is easy to check that the operation is well-defined and that $L^*(G)$ with the operations $+$ and $[\cdot,\cdot]$ is a Lie ring. If all quotients $G_i/G_{i+1}$ of an $N$-series $(*)$ have prime exponent $p$ then $L^*(G)$ can be viewed as a Lie algebra over the field with $p$ elements. In the important  case where the series $(*)$ is the $p$-dimension central series (also known under the name of Zassenhaus-Jennings-Lazard series) of $G$ we write $D_i=D_i(G)=\prod_{jp^k\geq i} \gamma_j(G)^{p^k}$ for  the $i$-th term of the series of $G$, $L(G)$ for the corresponding associated Lie algebra over the field with $p$ elements  and  $L_p(G)$ for the subalgebra generated by the first homogeneous component $D_1/D_2$ in $L(G)$. Observe that the  $p$-dimension central series is an $N_p$-series (see \cite[p.\ 250]{Huppert2} for details).

	The nilpotency of $L_p(G)$ has strong influence in the structure of a finitely generated pro-$ p $ group $G$.
	The proof of the following theorem can be found in \cite[1.(k) and 1.(o) in Interlude A]{GA}.

	\begin{theorem}\label{3} 
		Let $G$ be a finitely generated pro-$p$ group. If $L_p(G)$ is nilpotent, then $G$ is $p$-adic analytic.
	\end{theorem}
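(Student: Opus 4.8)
The plan is to derive the statement from Lazard's theory of $p$-adic analytic groups, in which, among finitely generated pro-$p$ groups, the analytic ones are exactly those of finite rank, equivalently those possessing an open uniformly powerful subgroup. Thus it is enough to prove that nilpotency of $L_p(G)$ forces $G$ to have finite rank and then to quote Lazard's theorem to pass from finite rank to $p$-adic analyticity. Throughout I work with the Zassenhaus--Jennings--Lazard series $D_i=D_i(G)$ and the associated graded Lie algebra $L(G)=\bigoplus_i D_i/D_{i+1}$ over $\mathbb{F}_p$, which carries a restricted $p$-power operation sending the degree-$j$ part into the degree-$pj$ part; $L_p(G)$ is the restricted subalgebra generated by the first homogeneous component $L_1=D_1/D_2$.

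First I would dispose of the formal points and explain why the hypothesis is the correct one. Since $G$ is a finitely generated pro-$p$ group, each $D_i$ is open and $L_1=G/\Phi(G)$ is finite-dimensional over $\mathbb{F}_p$, so $L_p(G)$ is generated by finitely many elements. To see that nilpotency is a natural demand, note that if $G$ were already analytic it would contain an open uniformly powerful subgroup $H$, for which $[H,H]\le\overline{H^{p}}$ lands in a high term of the series of $H$; this forces the Lie bracket on $L_p(H)$ to vanish, so that $L_p(H)$ is nilpotent (indeed abelian). The theorem asserts that this feature, far from being a mere consequence, is already sufficient.

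The substance is the implication that a nilpotent $L_p(G)$ makes the rank of $G$ finite, and this is the step I would take from the Lazard correspondence as recorded in Interlude~A of \cite{GA}. The guiding mechanism is that ordinary nilpotency caps the number of brackets needed to build $L_p(G)$ out of $L_1$, while the $p$-operation only rescales degrees; together these limit how fast the homogeneous components $D_i/D_{i+1}$ can grow, and through the correspondence between the filtration $(D_i)$ and its graded algebra this growth restriction becomes a uniform bound on the number of generators of the closed subgroups of $G$, that is, finite rank. I expect this middle implication to be the genuine obstacle: unlike the reductions above it is not a short computation but relies on the full dictionary between a pro-$p$ group and its Zassenhaus graded algebra together with the structure theory of powerful pro-$p$ groups, which is exactly why I would invoke it from \cite{GA} rather than reconstruct it.
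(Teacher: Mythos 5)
Your proposal lands in essentially the same place as the paper: the paper offers no proof of this theorem at all, simply citing 1.(k) and 1.(o) in Interlude~A of Dixon--du~Sautoy--Mann--Segal, and you likewise defer the single substantive implication (nilpotency of $L_p(G)$ forces finite rank, hence analyticity by Lazard) to that same Interlude rather than reconstructing it. Your surrounding discussion of the finite-rank/uniformly powerful characterization is a correct account of what that citation supplies, so the two treatments coincide in substance.
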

	
	Let $x\in G$ and let $i=i(x)$ be the largest positive integer such that $x\in D_i$ (here, $ D_i $ is a term of the $ p $-dimensional central series to $ G $). We denote by $\tilde{x}$ the element $xD_{i+1}\in L(G)$.  We now quote two results providing sufficient conditions for $\tilde{x}$ to be ad-nilpotent. The following lemma was established in \cite[p. 131]{la}.
	
	\begin{lemma}\label{lazard-ad} 
		For any $x\in G$ we have $(ad\,{\tilde x})^p=ad\,(\widetilde {x^p})$.  
	\end{lemma}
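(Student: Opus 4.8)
The plan is to deduce the identity from the fact that $L(G)$, being associated to the $N_p$-series of $p$-dimension subgroups, is a \emph{restricted} Lie algebra over $\mathbb{F}_p$ whose $[p]$-operation is induced by the $p$-power map of $G$; the asserted formula is then precisely the adjoint axiom $ad\,(a^{[p]})=(ad\,a)^p$ of a restricted Lie algebra, read with $a=\tilde{x}$ and $a^{[p]}=\widetilde{x^p}$. To make this concrete I would work inside the mod-$p$ group algebra $\mathbb{F}_pG$. Writing $\Delta$ for its augmentation ideal, the $p$-dimension central series is recovered as $D_i=\{g\in G:\ g-1\in\Delta^i\}$, and passing to the associated graded ring $A=\bigoplus_{i\ge1}\Delta^i/\Delta^{i+1}$ produces an associative $\mathbb{F}_p$-algebra into which $L(G)$ embeds: the homogeneous element $\tilde{x}=xD_{i+1}$ maps to the class $\overline{x-1}\in\Delta^i/\Delta^{i+1}$, with the Lie bracket going to the associative commutator $[a,b]=ab-ba$.

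Granting this embedding, the two sides of the identity can be computed inside $A$. The operator $ad\,\tilde{x}$ corresponds to $ad\,\overline{x-1}$, and the crucial elementary input is the associative identity
\[
(ad\,a)^p=ad\,(a^p),
\]
valid for every element $a$ of an associative $\mathbb{F}_p$-algebra. This follows by writing $ad\,a=L_a-R_a$, the difference of the \emph{commuting} operators of left and right multiplication by $a$, and invoking the characteristic-$p$ identity $(L_a-R_a)^p=L_a^{\,p}-R_a^{\,p}=L_{a^p}-R_{a^p}=ad\,(a^p)$. Taking $a=\overline{x-1}$, it remains only to identify the associative $p$-th power $\overline{x-1}^{\,p}=\overline{(x-1)^p}$. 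Since the binomial coefficients $\binom{p}{k}$ vanish in $\mathbb{F}_p$ for $0<k<p$, one has
\[
(x-1)^p=\sum_{k=0}^{p}\binom{p}{k}(-1)^{p-k}x^k=x^p-1
\]
in $\mathbb{F}_pG$. Thus $\overline{x-1}^{\,p}$ is the class of $x^p-1$ in degree $p\,i(x)$, which is exactly the image of $\widetilde{x^p}$, and faithfulness of the embedding returns the identity $(ad\,\tilde{x})^p=ad\,(\widetilde{x^p})$ in $L(G)$.

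I expect the genuine work to lie in the first paragraph rather than in these two one-line computations: establishing that $D_i=G\cap(1+\Delta^i)$ and that the associated graded construction furnishes a faithful, degree-preserving Lie embedding $L(G)\hookrightarrow A$ is the substance of the Zassenhaus--Jennings--Lazard theory. One must also be careful about the degree bookkeeping implicit in the notation $\widetilde{x^p}$: it has to be read as the value $\tilde{x}^{[p]}=x^pD_{pi+1}$ of the $[p]$-operation in degree $p\,i(x)$ (which may be zero, for instance when $x^p\in D_{pi+1}$), consistent with $L(G)$ being a restricted Lie algebra. Since all of this is exactly the framework of Lazard's memoir, in the final write-up I would cite that source for the embedding and devote the argument itself to the two clean computations above.
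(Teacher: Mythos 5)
The paper does not actually prove this lemma: it is quoted as ``established in \cite[p.\ 131]{la}'', so there is no in-paper argument to compare against, and your proposal supplies the standard proof lying behind that citation. The argument is correct. Passing to the graded algebra $A=\bigoplus_{i\ge 1}\Delta^i/\Delta^{i+1}$ is legitimate because injectivity of $xD_{i+1}\mapsto (x-1)+\Delta^{i+1}$ on each homogeneous component is precisely the Jennings--Lazard theorem $D_i=G\cap(1+\Delta^i)$; once inside an associative $\mathbb{F}_p$-algebra, both of your computations are sound: $(L_a-R_a)^p=L_{a^p}-R_{a^p}$ because $L_a$ and $R_a$ commute and the intermediate binomial coefficients vanish, and $(x-1)^p=x^p-1$ because $x$ commutes with $1$. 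The one point that deserves emphasis is the degree bookkeeping you flag at the end, which is a genuine issue rather than pedantry: your argument literally proves $(ad\,\tilde x)^p=ad\,(x^pD_{pi+1})$, i.e.\ the right-hand side is the restricted power $\tilde x^{[p]}$ placed in degree $p\,i(x)$, and this element is $0$ whenever $x^p\in D_{pi+1}$. The paper's stated convention $\widetilde{y}=yD_{i(y)+1}$ would instead place $x^p$ in degree $i(x^p)$, which can exceed $p\,i(x)$; the two readings coincide exactly when $i(x^p)=p\,i(x)$ or both sides vanish, and your restricted-power reading is the one this method proves and also the only version needed downstream, since Corollary \ref{lemma-lazard} only extracts ad-nilpotency of $\tilde x$. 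So there is no gap, but in a final write-up you should say explicitly in which homogeneous component $\widetilde{x^p}$ is taken, and cite \cite{la} (or Quillen's description of $A$ as the restricted enveloping algebra of $L(G)$) for the faithful graded embedding.
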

	
	\begin{corollary}\label{lemma-lazard}
		Let $x$ be an element of a group $G$ for which there exists a positive integer $m$ such that $x^m$ is $ n $-Engel. Then $\tilde{x}$ is ad-nilpotent.
	\end{corollary}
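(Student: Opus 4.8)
The plan is to deduce ad-nilpotency of $\tilde x$ from the hypothesis in two stages: first I would obtain it for a suitable $p$-power of $x$ whose image in $L(G)$ is controlled \emph{directly} by the Engel condition, and then transfer it to $\tilde x$ itself by means of Lemma \ref{lazard-ad}. Throughout, let $p$ be the prime for which $L(G)$ is the Lie algebra over $\mathbb{F}_p$ associated with the $p$-dimension central series $\{D_i\}$, and write $i=i(x)$, so that $\tilde x=xD_{i+1}\in L_i$.

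The first step is a direct computation showing that \emph{any} element $a\in G$ which is $n$-Engel, in the sense that $[y,{}_n a]=1$ for all $y\in G$, yields an ad-nilpotent $\tilde a$ with $(ad\,\tilde a)^n=0$. Indeed, put $s=i(a)$ and take an arbitrary homogeneous element $\tilde y=yD_{j+1}$ with $y\in D_j$. Using the defining formula $[uD_{r+1},aD_{s+1}]=[u,a]D_{r+s+1}$ for the bracket on homogeneous components, an induction on $k$ gives
\[
[\tilde y,{}_{k}\tilde a]=[y,{}_{k} a]\,D_{j+ks+1}\in L_{j+ks}.
\]
Taking $k=n$ and invoking $[y,{}_n a]=1$ forces $[\tilde y,{}_n\tilde a]=0$; since homogeneous elements span $L(G)$, we conclude $(ad\,\tilde a)^n=0$. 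In particular, as $x^m$ is $n$-Engel by assumption, $\widetilde{x^m}$ is ad-nilpotent.

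The main obstacle is that $\widetilde{x^m}$ is not $\tilde x$, and $m$ need not be a power of $p$, so Lemma \ref{lazard-ad} (a Frobenius-type identity valid for $p$-th powers) cannot be applied to the exponent $m$ directly. To bridge this gap I would factor $m=p^{a}b$ with $p\nmid b$ and set $z=x^{p^{a}}$, $j=i(z)$. Since $L_j=D_j/D_{j+1}$ is an $\mathbb{F}_p$-vector space, the image of $z^{b}=x^{m}$ in $L_j$ equals $b\,\tilde z$; as $b$ is invertible modulo $p$ this is nonzero, so $i(x^{m})=j$ and $\widetilde{x^{m}}=b\,\widetilde{x^{p^{a}}}$. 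Hence $ad\,\widetilde{x^{m}}=b\,ad\,\widetilde{x^{p^{a}}}$, and from $(ad\,\widetilde{x^{m}})^{n}=0$ together with $b^{n}\neq 0$ in $\mathbb{F}_p$ I obtain $(ad\,\widetilde{x^{p^{a}}})^{n}=0$; that is, the $p$-power $\widetilde{x^{p^{a}}}$ is itself ad-nilpotent.

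It then remains to climb back down the $p$-power. Iterating Lemma \ref{lazard-ad} gives $(ad\,\tilde x)^{p^{a}}=ad\,\widetilde{x^{p^{a}}}$, whence $(ad\,\tilde x)^{p^{a}n}=(ad\,\widetilde{x^{p^{a}}})^{n}=0$, so $\tilde x$ is ad-nilpotent, as required. A small amount of care is needed in the degenerate case $i(x^{p^{a}})>p^{a}i$: there the image of $x^{p^{a}}$ in $L_{p^{a}i}$ vanishes, so the iterated form of Lemma \ref{lazard-ad} already reads $(ad\,\tilde x)^{p^{a}}=0$ and the conclusion is immediate. Either way $\tilde x$ is ad-nilpotent, which completes the argument.
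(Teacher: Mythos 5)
Your proof is correct and is precisely the argument the paper leaves implicit (the corollary is stated without proof): the Engel hypothesis gives $(ad\,\widetilde{x^m})^n=0$ by evaluating on homogeneous elements, the prime-to-$p$ part of $m$ is absorbed via $\widetilde{x^m}=b\,\widetilde{x^{p^a}}$ with $b$ invertible in $\mathbb{F}_p$, and Lemma \ref{lazard-ad} iterated $a$ times yields $(ad\,\tilde x)^{p^a n}=0$. Your attention to the degenerate case $i(x^{p^a})>p^a\, i(x)$ is the right extra care; nothing is missing.
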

	
	The following theorem is a particular case of a result that  was established by Wilson and Zelmanov in \cite{wize}.
	
	\begin{theorem}\label{identity} 
		Let $G$ be a group satisfying an identity. Then for each prime number $p$ the Lie algebra $L_p(G)$ satisfies a polynomial identity. 
	\end{theorem}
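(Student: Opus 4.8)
The plan is to place $L(G)$ inside an associative algebra whose polynomial identities are governed by the group law, and then to transfer an associative identity back to the Lie setting. Let $A=\mathbb{F}_pG$ be the group algebra over the field of $p$ elements, let $I$ be its augmentation ideal, and filter $A$ by the powers $A\supseteq I\supseteq I^2\supseteq\cdots$. Forming the associated graded algebra $\mathrm{gr}(A)=\bigoplus_{i\geq 0}I^i/I^{i+1}$, the first step is to invoke Quillen's theorem, which identifies $\mathrm{gr}(A)$ with the restricted universal enveloping algebra $u(L(G))$ of the restricted Lie algebra $L(G)$ attached to the $p$-dimension central series; under this identification each homogeneous component $D_i/D_{i+1}$ of $L(G)$ is carried into $I^i/I^{i+1}$. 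In particular $L(G)$, and hence its subalgebra $L_p(G)$, is realized as a Lie subalgebra of $\mathrm{gr}(A)$ under the commutator bracket.

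The second and decisive step is to produce an associative polynomial identity on $\mathrm{gr}(A)$ out of the group identity $w(x_1,\ldots,x_d)\equiv 1$. Via the Magnus embedding $x_j\mapsto 1+t_j$ of the free group into the algebra of formal power series in noncommuting variables $t_1,\ldots,t_d$, the word $w$ expands as $1+\lambda_w+(\text{higher terms})$, where the leading homogeneous component $\lambda_w$ is an element of the free restricted Lie algebra. Since $G$ satisfies $w\equiv 1$, evaluating $w$ at arbitrary elements of $G$ yields $1$ in $A$, and reading off the lowest-degree parts forces relations in $\mathrm{gr}(A)=u(L(G))$. The main obstacle is exactly here: a single substitution gives only one relation, the leading term $\lambda_w$ may vanish, and it is not a priori clear that these relations assemble into a \emph{nontrivial} polynomial identity valid for all substitutions. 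This is the technical heart of the Wilson--Zelmanov argument; it is overcome by a careful analysis of the Magnus expansion together with repeated partial linearizations of $w$ and the combinatorics of the free restricted Lie algebra, from which one extracts a nonzero polynomial $f$ such that $\mathrm{gr}(A)$ satisfies $f\equiv 0$ as an associative algebra.

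Granting that $\mathrm{gr}(A)=u(L(G))$ is an associative PI-algebra, the conclusion is routine. Any Lie subalgebra of an associative PI-algebra satisfies a nonzero Lie polynomial identity, so $L(G)$ satisfies such an identity; and since $L_p(G)$ is a subalgebra of $L(G)$, it satisfies the same identity. This proves that $L_p(G)$ is a PI Lie algebra for every prime $p$, as required. I expect the reductions of the first and third paragraphs to be formal once Quillen's identification is in hand; the genuinely hard point is the extraction of a nontrivial associative identity from the group law in the second step.
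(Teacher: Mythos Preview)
The paper does not prove this theorem; it is quoted verbatim as a result of Wilson and Zelmanov \cite{wize} with the sentence ``The following theorem is a particular case of a result that was established by Wilson and Zelmanov in \cite{wize}.'' So there is no in-paper argument to compare against, and any comparison must be with the original Wilson--Zelmanov proof.

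Your outline is broadly faithful to that proof: Quillen's identification of $\mathrm{gr}(\mathbb{F}_pG)$ with the restricted enveloping algebra $u(L(G))$, the Magnus expansion of the group word, and the observation that the difficulty is concentrated in extracting a \emph{nontrivial} identity from the leading term of $w-1$ after linearization. One point of divergence worth flagging: Wilson and Zelmanov do not pass through an associative PI on $u(L(G))$ and then invoke a general ``Lie subalgebra of associative PI is Lie PI'' principle. Instead, they work directly in the free Lie algebra, showing that the homogeneous components of $w-1$ in the Magnus algebra already lie in the free Lie algebra (since $w$ is a commutator word after multilinearization), and obtain a multilinear \emph{Lie} identity outright. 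Your step~3 is a legitimate alternative, but the passage from associative PI to Lie PI is itself a nontrivial theorem (it can be extracted from results of Latyshev and Bahturin, or from the structure theory of PI-algebras), so you are trading one black box for another rather than simplifying. If you want your sketch to match the cited source, replace the third paragraph by the remark that the leading term $\lambda_w$ (after full linearization) is already a Lie polynomial, and that its nonvanishing is exactly the combinatorial core you identified in the second paragraph.
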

	
	\section{Proof of the main theorem}
	
	The following useful result is a consequence of \cite[Lemma 2.1]{W} (see also \cite[Lemma 3.5]{PS-2017} for details).
	
	\begin{lemma}\label{lemma-Pavel}
		Let $ G $ be a finitely generated residually finite-nilpotent group. For each prime $ p $ let $ J_p $ be the intersection of all normal subgroups of $ G $ of finite $ p $-power index. If $ G/J_p $ has a nilpotent subgroup of finite index of class at most $ c $ for each $ p $, then $ G $ also has a nilpotent subgroup of finite index  of class at most $ c $.
	\end{lemma}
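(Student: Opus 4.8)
The plan is to embed $G$ into the Cartesian product of the quotients $G/J_p$ and then to manufacture a single finite-index subgroup that is nilpotent of class at most $c$ by cutting $G$ down only at the finitely many primes that genuinely matter.

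First I would record that $\bigcap_p J_p = 1$. Indeed, given $1\neq g\in G$, residual finite-nilpotence yields a normal subgroup $N$ with $G/N$ finite nilpotent and $g\notin N$; writing the finite nilpotent group $G/N$ as the direct product of its Sylow subgroups, $g$ has nontrivial image under the projection onto some Sylow $p$-subgroup, whose kernel is a normal subgroup of $G$ of $p$-power index. Hence $g\notin J_p$ for that prime, so $\bigcap_p J_p=1$ and $G$ embeds into $\prod_p G/J_p$. This is exactly the step where residual finite-\emph{nilpotence}, rather than mere residual finiteness, is needed, since it guarantees that the separating finite quotients are detected by the various $J_p$. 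The goal then becomes to exhibit a finite-index subgroup $H\le G$ with $\gamma_{c+1}(H)=1$.

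The hard part will be to pass from information about each individual $G/J_p$ to information about $G$: there are infinitely many primes, and an infinite intersection of finite-index subgroups need not have finite index. This is precisely where \cite[Lemma 2.1]{W} enters. Using that $G$ is finitely generated, so that each lower central factor $\gamma_i(G)/\gamma_{i+1}(G)$ is a finitely generated abelian group, together with the uniform class bound $c$, one shows that only finitely many primes are essential; concretely, that $\gamma_{c+1}(G)\le J_p$ (equivalently, $G/J_p$ is already nilpotent of class at most $c$) for all but finitely many $p$. I expect this reduction to be the main obstacle, while the remaining steps are bookkeeping.

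Granting this, let $p_1,\dots,p_k$ be the finitely many primes for which $G/J_{p_i}$ is not nilpotent of class at most $c$. For each $i$ the hypothesis supplies a finite-index subgroup of $G/J_{p_i}$ that is nilpotent of class at most $c$; pulling it back gives a finite-index subgroup $H_i\le G$ with $J_{p_i}\le H_i$ and $\gamma_{c+1}(H_i)\le J_{p_i}$. Set $H=\bigcap_{i=1}^{k}H_i$, a subgroup of finite index in $G$. I claim that $\gamma_{c+1}(H)\le J_p$ for every prime $p$: if $p=p_i$ then $H\le H_i$ forces $\gamma_{c+1}(H)\le\gamma_{c+1}(H_i)\le J_{p_i}$, while for every other prime $p$ the quotient $G/J_p$ is nilpotent of class at most $c$, so $\gamma_{c+1}(H)\le\gamma_{c+1}(G)\le J_p$. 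Consequently $\gamma_{c+1}(H)\le\bigcap_p J_p=1$, so $H$ is nilpotent of class at most $c$ and of finite index in $G$, which is the required subgroup.
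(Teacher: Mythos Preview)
Your overall strategy matches the paper's: reduce to finitely many primes via \cite[Lemma~2.1]{W} (equivalently \cite[Lemma~3.5]{PS-2017}), pull back the given finite-index nilpotent subgroups from the relevant $G/J_p$, and intersect. The paper's proof is essentially your final paragraph.

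There is, however, a misalignment in how you phrase the finite reduction. What the cited results actually yield is a finite set $\pi$ of primes with $\bigcap_{p\in\pi}J_p=1$, i.e.\ an embedding $G\hookrightarrow\prod_{p\in\pi}G/J_p$; they do not directly give your stronger assertion that $\gamma_{c+1}(G)\le J_p$ for all but finitely many $p$. That assertion invokes the uniform bound $c$ in a way the lemma itself does not, and it is not clear it follows from the hypotheses without additional argument. Fortunately it is also unnecessary: once you have the finite set $\pi$, take $H=\bigcap_{p\in\pi}H_p$ where each $H_p\le G$ has finite index and $\gamma_{c+1}(H_p)\le J_p$; then $\gamma_{c+1}(H)\le\bigcap_{p\in\pi}J_p=1$ immediately, with no need to control the primes outside $\pi$. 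This is precisely what the paper does.
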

	
	\begin{proof}
		It follows from proof of \cite[Lemma 3.5]{PS-2017} that there exists a finite set of primes $ \pi $ such that $ G $ embeds in the direct product $\prod_{p\in \pi} G/J_{p}. $ We will identify $ G $ with its images in direct product. By hypothesis, for any $ p\in \pi $, $ G/J_{p} $ contains a nilpotent subgroup of finite index $H_p $ with class at most $ c. $ Set $ H=\cap_{p\in \pi} {H_p} $. Thus, $ G\cap H $ has finite index in $ G $ and has nilpotency class at mos $ c $, which completes the proof.
	\end{proof}

	Recall that a group is locally graded if every non-trivial finitely generated subgroup has a proper subgroup of finite index.  Note that the quotient of a locally graded group need not be locally graded,  since free groups  are locally graded  (see \cite[6.1.9]{Rob}), but no finitely  generated infinite simple group is locally graded. However, the following  results give a sufficient conditions for a quotient to be locally graded (see \cite{LMS} for details).

	\begin{lemma} \label{HP}   Let $G$ be a locally graded group and $N$ a normal locally nilpotent subgroup of $G$. Then $G/N$ is locally graded.
	\end{lemma}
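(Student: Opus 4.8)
The plan is to verify the defining property of local gradedness for $G/N$ directly, i.e. to show that every non-trivial finitely generated subgroup of $G/N$ has a proper subgroup of finite index. So let $\bar H$ be a non-trivial finitely generated subgroup of $G/N$. Taking preimages of a finite generating set, I can write $\bar H = HN/N$ with $H\le G$ finitely generated, so that $\bar H \cong H/(H\cap N)$. Put $L = H\cap N$. Since $N$ is normal and locally nilpotent, $L$ is a locally nilpotent normal subgroup of $H$; since $\bar H \ne 1$ we have $L \ne H$; and since $H$ is a finitely generated subgroup of the locally graded group $G$, it is itself finitely generated and locally graded. Thus the whole statement reduces to the following assertion: if $H$ is a finitely generated locally graded group and $L \triangleleft H$ is locally nilpotent with $L \ne H$, then $H/L$ has a proper subgroup of finite index; equivalently, $H$ has a proper subgroup of finite index containing $L$.

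To prove this I would argue by contradiction, assuming that $H/L$ has no proper subgroup of finite index. The equivalent reformulation is convenient: a finite-index subgroup of $H/L$ corresponds to a finite-index subgroup of $H$ containing $L$, so the assumption says that $LM = H$ for every finite-index normal subgroup $M$ of $H$. Consequently each finite quotient $H/M = LM/M \cong L/(L\cap M)$ is a finite homomorphic image of the locally nilpotent group $L$, hence nilpotent; so \emph{every} finite quotient of $H$ is nilpotent. Since a non-trivial finite nilpotent group has non-trivial abelianization, this also forces $(H/L)^{\mathrm{ab}} = H/LH'$ to be trivial, i.e. $H/L$ is perfect. Note that $H$ itself, being finitely generated, non-trivial and locally graded, does have a proper subgroup of finite index, so the nilpotent finite quotients produced here are genuinely present; the difficulty is that they are all covered by the image of $L$.

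The main obstacle is precisely to rule out this critical configuration, and this is where the hypothesis that $L$ is locally nilpotent (and not merely, say, residually finite) is indispensable. My plan is to contradict the local gradedness of $H$ by exhibiting inside $H$ a finitely generated subgroup with no proper subgroup of finite index. Because $LM=H$ for every finite-index normal $M$, each such $M$ is itself finitely generated and locally graded, maps onto the perfect group $H/L$ with locally nilpotent kernel $L\cap M$, and satisfies $M/(L\cap M)\cong H/L$; iterating local gradedness then yields a strictly descending chain $H=M_0 > M_1 > \cdots$ of finite-index subgroups with $M_i/(L\cap M_i)\cong H/L$ and $[L:L\cap M_i]\to\infty$. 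From this data one extracts a finitely generated subgroup surjecting onto the finitely generated perfect group $H/L$ with locally nilpotent kernel and admitting no proper subgroup of finite index, whose existence contradicts the local gradedness of $H$; the delicate non-split case of this extraction is exactly the structural point carried out in \cite{LMS}. Once the critical case is excluded, the reduction of the first paragraph shows that every non-trivial finitely generated subgroup of $G/N$ has a proper subgroup of finite index, so $G/N$ is locally graded, as required.
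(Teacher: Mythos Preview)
The paper does not give a proof of this lemma at all: it simply records the statement and cites \cite{LMS} for the argument. So there is no ``paper's own proof'' to compare against; both you and the paper ultimately defer to \cite{LMS}.

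That said, your write-up does more than the paper. Your reduction is correct: passing to a finitely generated $H\le G$ with $L=H\cap N$ locally nilpotent, normal, and proper is exactly the right first move, and your derivation that under the negation every finite quotient of $H$ is nilpotent (hence $H/L$ is perfect) is valid and is indeed the opening of the Longobardi--Maj--Smith argument. Where your proposal stops being a proof is the last paragraph: the descending chain $H=M_0>M_1>\cdots$ is easy to build, but the sentence ``from this data one extracts a finitely generated subgroup surjecting onto $H/L$ with locally nilpotent kernel and admitting no proper subgroup of finite index'' is not justified---the candidate $H$ itself already has those first two properties yet \emph{does} have proper finite-index subgroups, so some genuinely new subgroup must be produced, and you do not say which one or why it works. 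You acknowledge this by pointing back to \cite{LMS} for ``the delicate non-split case'', which means your argument, like the paper's, is complete only modulo that citation.

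In short: your setup is correct and matches the approach of \cite{LMS}; the crucial extraction step is not carried out in your text, and the paper makes no attempt to carry it out either.
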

	
	Let $ p $ be a prime and $ q $ be a positive integer. 
	A finite $p$-group $ G $ is said to be powerful if and only if $ [G,G]\leq G^p $ for $ p\neq 2 $ (or $[G,G]\leq G^4 $ for $ p=2 $), where  $ G^q $ denotes the subgroup of $ G $ generated by all $ q $th powers. 
	While considering a pro-$ p $ group $ G $ we shall be interested only in closed subgroups. So by the commutator subgroup $ G'=[G,G] $ we mean
	the closed commutator subgroup, $ G^q $  means the closed subgroup generated by the $ q $th powers. Similarly to powerful finite $ p $-groups, we may define the powerful pro-$ p $ groups. 
	For more details we refer the reader to \cite[Chapters 2 and 3 ]{LMS}. In \cite{TA} the following useful result for powerful finite $ n $-Engel $ p $-group was established.
	
	\begin{lemma}\label{AG}
		There exists a function $ s(n) $ such that any powerful finite $ n $-Engel $ p $-group is nilpotent of class at most $ s(n) $.
	\end{lemma}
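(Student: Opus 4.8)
Since a finite $p$-group is automatically nilpotent, the only real content of the statement is that the class can be bounded by a function of $n$ that is independent of the prime $p$, of the exponent, and of the number of generators of $G$. The plan is to convert the $n$-Engel law into a statement about an associated Lie structure and to use the powerful hypothesis, which forces $p$-th powers and commutators to interact rigidly, to remove the dependence on $p$. I would separate the finitely many ``small'' primes from the ``large'' ones, the latter being amenable to characteristic-zero Lie theory.

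For $p$ large relative to $n$, first I would pass to the quotient $G/\gamma_p(G)$, which is again powerful and $n$-Engel and has class less than $p$. The Lazard (Baker--Campbell--Hausdorff) correspondence then attaches to it a finite Lie ring $L$ of the same nilpotency class whose additive group is a $p$-group, and the identity $[y,_n x]\equiv 1$ becomes the $n$-Engel identity for $L$. Since $p>n$, every prime at most $n$ is invertible in $L$, so $L$ lies in the range where the characteristic-zero theory of $n$-Engel Lie algebras (via Zelmanov's results) provides nilpotency of class at most some $g(n)$. Thus $\gamma_{g(n)+1}(G)\le\gamma_p(G)$; using the powerful relation $\gamma_{i+1}(G)\le\gamma_i(G)^p$ one gets $\gamma_{g(n)+1}(G)=\gamma_{g(n)+1}(G)^p$, and since this is a finite $p$-group it must be trivial. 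Hence $G$ has class at most $g(n)$ whenever $p$ exceeds a bound depending only on $n$.

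It remains to treat the finitely many primes $p$ below that bound, and this is where I expect the real difficulty. For such a $p$ I would form the restricted Lie algebra $L_p(G)$ attached to the $p$-dimension central series: by Theorem \ref{identity} it satisfies a polynomial identity, it is generated by the finitely many elements of its first homogeneous component, and every commutator in those generators is of the form $\tilde{w}$ for a group commutator $w$, so Corollary \ref{lemma-lazard} shows it is ad-nilpotent; Theorem \ref{1} then gives that $L_p(G)$ is nilpotent. The qualitative nilpotency is not enough, however, because the class supplied by Zelmanov's theorem still depends on the number of generators, whereas the assertion demands a bound in $n$ alone. The crux is to eliminate this dependence, and here the powerful structure must be used decisively: the commutator calculus of powerful $p$-groups makes each Engel step $[x,_k y]$ descend one level of the $p$-power filtration, so that the vanishing of $[x,_n y]$ collapses a term of the lower central series whose position is controlled by $n$ and $p$ only. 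Carrying this out yields, for each of the finitely many small primes, a bound independent of the group; taking $s(n)$ to be the maximum of $g(n)$ and these finitely many bounds completes the argument.
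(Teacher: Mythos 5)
First, note that the paper does not prove Lemma \ref{AG} at all: it is quoted from Abdollahi and Traustason \cite{TA}, so there is no internal argument to compare yours with; you are attempting to reprove a substantial external result. Your division into large and small primes is the right overall shape, and the large-prime half is close to workable, but it contains an unjustified step: under the Lazard correspondence the group commutator equals the Lie bracket plus higher-order Baker--Campbell--Hausdorff terms, so the group identity $[x,_ny]\equiv 1$ does not directly yield the Lie identity $[x,_ny]=0$. The standard repair is to pass instead to the graded Lie ring of the lower central series, where the Engel identity holds on homogeneous elements and can be multilinearized once $p>n$; even then the threshold on $p$ is not ``$p>n$'' but ``$p$ does not divide the torsion of $\gamma_{g(n)+1}$ of the relatively free $n$-Engel Lie ring over $\mathbb{Z}$'', a finite but a priori much larger set of excluded primes. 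These points are repairable, and notably the powerful hypothesis plays no essential role for large $p$.

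The genuine gap is the small-prime case, which is the actual content of the lemma. Your appeal to $L_p(G)$, Theorem \ref{identity}, Corollary \ref{lemma-lazard} and Theorem \ref{1} only yields nilpotency of class depending on the number of generators (and on $p$ through the identity), as you concede. The proposed remedy --- that each Engel step $[x,_ky]$ descends one level of the $p$-power filtration, so that the vanishing of $[x,_ny]$ collapses a term of the lower central series --- is not an argument: in any powerful $p$-group one has $[x,_ky]\in G^{p^k}$ automatically, whether or not the group satisfies an Engel law, so the hypothesis $[x,_ny]\equiv 1$ extracts no collapse of any $\gamma_i(G)$ from this observation. What is needed is a quantitative mechanism converting powerfulness plus the Engel law into a class bound independent of the number of generators and of the exponent; this is precisely what \cite{TA} supplies, and as written the final sentence of your proof asserts that conclusion rather than deriving it.
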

	
	The proof of Theorem \ref{BM} will  requires the  following lemma.
	
	\begin{lemma}\label{main-powerful}
		Let $ s(n) $ be as in Lemma \ref{AG}. If $ G $ is a finitely generated powerful pro-$ p $ group satisfying the identity $ [x,_ny^q]\equiv 1, $  then $ G^q $ has  nilpotency class at most $ s(n) $.
	\end{lemma}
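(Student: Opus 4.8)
The plan is to reduce the statement to Lemma~\ref{AG} by showing that $G^q$ is a powerful $n$-Engel pro-$p$ group and then passing to finite quotients. The essential preliminary observation is a structural fact about powerful pro-$p$ groups: \emph{every element of $G^q$ is a single $q$-th power}, so that $G^q=\{g^q:g\in G\}$ as sets. To see this, write $q=p^am$ with $\gcd(m,p)=1$. The $m$-th power map $g\mapsto g^m$ is a bijection of $G$: on each finite $p$-group quotient the order of every element is a power of $p$ and $m$ is invertible modulo that order, whence $g\in\langle g^m\rangle$, and taking inverse limits gives bijectivity on $G$. Combining this with the standard fact that in a powerful pro-$p$ group the $p^a$-th powers already constitute the subgroup $G^{p^a}$, one obtains $\{g^q:g\in G\}=\{(g^m)^{p^a}:g\in G\}=\{h^{p^a}:h\in G\}=G^{p^a}=G^q$.

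With this in hand the $n$-Engel property of $G^q$ is immediate. Given $a,b\in G^q$, write $b=y^q$ for some $y\in G$; the hypothesis $[x,_ny^q]\equiv 1$ evaluated at $x=a$ then gives $[a,_nb]=[a,_ny^q]=1$. As $a,b$ were arbitrary in $G^q$, the subgroup $G^q$ satisfies the $n$-Engel identity. It is moreover powerful, since $G^q=G^{p^a}$ and the powers $G^{p^i}$ of a powerful pro-$p$ group are themselves powerful.

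It then remains to bound the class, and here I would pass to finite quotients. Being a pro-$p$ group, $G^q$ is the inverse limit of its finite quotients $G^q/U$ over the open normal subgroups $U$, and each such quotient is a finite $p$-group inheriting both the powerful property and the $n$-Engel identity. Lemma~\ref{AG} therefore bounds the class of every $G^q/U$ by $s(n)$, that is $\gamma_{s(n)+1}(G^q)\le U$ for all such $U$. Since $\bigcap_U U=1$, this yields $\gamma_{s(n)+1}(G^q)=1$, so $G^q$ has nilpotency class at most $s(n)$, as required.

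The step I expect to be the main obstacle is the first one: transferring the hypothesis from $q$-th powers to the whole of $G^q$. A priori the identity only constrains elements of the form $y^q$, whereas $G^q$ consists of products of such powers, and the Engel identity is not formally inherited by products. The argument succeeds precisely because the theory of powerful pro-$p$ groups forces every element of $G^q$ to be a genuine $q$-th power; once this is secured, the inheritance-by-quotients and inverse-limit arguments in the remaining steps are routine.
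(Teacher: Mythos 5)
Your proposal is correct and follows essentially the same route as the paper: the paper likewise reduces to showing that $G^q=\{x^q : x\in G\}$ is a powerful $n$-Engel pro-$p$ group (citing \cite[Corollary 3.5]{GA} for the set-of-$q$th-powers fact that you prove by hand via the decomposition $q=p^am$) and then applies Lemma \ref{AG} to the finite powerful quotients. The only difference is cosmetic: you unpack the cited structural fact, and the paper adds an extra remark (not needed for the lemma's conclusion) that $G/G^q$ is finite.
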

	\begin{proof}
		Since $ G $ satisfies the identity $ [x,_ny^q]\equiv 1, $ we can deduce from \cite[Corollary 3.5]{GA} that  $ H=G^q=\{x^q\ | \ x\in G\} $ is a powerful  $ n $-Engel pro-$ p $ group. According to \cite[Corollary 3.3]{GA}, $ H $ is the inverse limit of an inverse system of powerful finite
		$ p $-groups $ H_\lambda $. Lemma \ref{AG} implies that any group $ H_\lambda $ has class at most $ s(n) $, and so,  $ H $ has class at most $ s(n) $ as well. Finally, by a result due to Zelmanov \cite[Theorem 1]{Z-torsion} saying that any torsion profinite group is locally finite we get that the quotient group $G/H $ is finite. This completes the proof.
	\end{proof}
	
	The proof of Theorem \ref{BM} will also require the following result, due to Burns and Medvedev \cite{BM}.
	
	\begin{theorem}\label{BM-thm}
		There exist functions $ c(n) $ and $ e(n) $ such that any residually finite $ n $-Engel group $ G $ has a nilpotent normal subgroup $ N $ of class at most $ c(n) $ such that $ G/N $ has exponent dividing $ e(n) $.	
	\end{theorem}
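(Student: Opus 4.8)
The plan is to reduce Burns--Medvedev's statement to a uniform claim about finite $n$-Engel $p$-groups and to read off the two bounds from the Lie machinery of Section~2. The first step is to pass to finite quotients. If $G$ is residually finite and $n$-Engel, every finite quotient $G/K$ is a finite $n$-Engel group, hence nilpotent (in a finite group the Engel condition forces nilpotency), so $G$ is residually finite-nilpotent and each $G/K$ is the direct product of its Sylow subgroups. Suppose one has proved, with constants $c=c(n)$ and $e=e(n)$ depending only on $n$, that every finite $n$-Engel group $\bar G$ possesses a nilpotent normal subgroup of class $\le c$ whose quotient has exponent dividing $e$. Applying this in each $G/K$, pulling the subgroup back to a normal $\bar N_K\trianglelefteq G$ with $K\le\bar N_K$, and setting $N=\bigcap_K\bar N_K$, one checks that $\gamma_{c+1}(\bar N_K)\le K$ for every $K$, whence $\gamma_{c+1}(N)\le\bigcap_K K=1$, so $N$ is nilpotent of class $\le c$; and $G/N$ embeds into $\prod_K G/\bar N_K$, a product of groups of exponent dividing $e$, so $G/N$ has exponent dividing $e$. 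Thus everything reduces to the finite case, and since a finite nilpotent group splits as the direct product of its Sylow subgroups, it further reduces to a single finite $n$-Engel $p$-group $P$, treated uniformly in $p$.

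For such a $P$, form the associated Lie algebra $L(P)$ over $\mathbb{F}_p$ from the $p$-dimension central series. By Theorem~\ref{identity} it satisfies a polynomial identity of degree bounded in terms of $n$, and since every element of $P$ is $n$-Engel, Corollary~\ref{lemma-lazard} (with $m=1$) makes every homogeneous component $\tilde x$, and hence every commutator in the degree-one generators, ad-nilpotent; Theorem~\ref{1} then gives that $L(P)$ is nilpotent. The essential point is uniformity: by the effective bounds underlying Zelmanov's theorem (the quantitative form of Theorem~\ref{1}, together with the bounded Engel and PI data coming from the fixed group law), the nilpotency class of $L(P)$ is at most a constant $c_0(n)$ that does not depend on $p$ or on $P$.

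It remains to transport this Lie-theoretic bound back to $P$, and here the primes split into two regimes. For every prime $p>c_0(n)$ the dimension series and the lower central series agree in the relevant range (the $p$-power maps push the $N_p$-series relation $D_i^{p}\le D_{pi}$ beyond degree $c_0(n)$), so via Lazard's correspondence $P$ is itself nilpotent of class $\le c_0(n)$; for these primes one takes $N_p=P$ and contributes nothing to the exponent. Only the finitely many primes $p\le c_0(n)$ survive, and for them the class of $P$ need not be bounded. For these I would use the theory of powerful $p$-groups (Theorem~\ref{3} and \cite{LMS}): the bounded nilpotency of $L(P)$, fed through the $p$-adic analytic and powerful-group machinery, locates a powerful normal subgroup $N_p\trianglelefteq P$ with $P/N_p$ of exponent bounded by a function of $n$ and $p$, while Lemma~\ref{AG} bounds the class of $N_p$ by $s(n)$. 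Taking $c(n)=\max\{c_0(n),s(n)\}$ and letting $e(n)$ be the product over the finite set $\{p\le c_0(n)\}$ of the resulting prime-power exponents finishes the finite case, and the reduction above then yields the theorem.

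The main obstacle is precisely the small-prime regime: producing, uniformly over all finite $n$-Engel $p$-groups for each fixed $p\le c_0(n)$, a nilpotent normal subgroup of bounded class whose quotient has bounded exponent, i.e.\ manufacturing the exponent $e(n)$. Bounded nilpotency of the associated Lie ring does not by itself bound the class of $P$ when $p$ is small, because the iterated $p$-power maps make the dimension series long; quantifying and absorbing this extra length is exactly what forces one to replace the clean Lazard isomorphism by the quantitative powerful and $p$-adic analytic group theory, and it is there, rather than in the large-prime case, that the exponent bound is both needed and obtained.
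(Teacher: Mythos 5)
Note first that the paper contains no proof of Theorem \ref{BM-thm}: it is imported as a known result of Burns and Medvedev \cite{BM} and used as an ingredient in the proof of Theorem \ref{BM}, so your proposal can only be judged on its own terms --- and on those terms it has genuine gaps. Your reduction to finite $n$-Engel $p$-groups (Zorn's theorem for finite quotients, pulling back the subgroups $\bar N_K$, intersecting, splitting over Sylow subgroups) is correct and standard. The first gap is the constant $c_0(n)$. Theorem \ref{1} is purely qualitative, and every quantitative version of Zelmanov's local nilpotency theorem bounds the class of $L$ in terms of the \emph{number of generators} together with the PI and ad-nilpotency data; since $\dim D_1/D_2$ is unbounded as $P$ ranges over all finite $n$-Engel $p$-groups, no bound on the class of $L(P)$ ``independent of $p$ or of $P$'' can be extracted from it. Nor can such uniformity be restored at small primes: by Razmyslov's examples there exist non-soluble Engel Lie algebras in small characteristic, so there Engel Lie algebras are locally nilpotent with class necessarily growing with the number of generators --- which is exactly why the exponent $e(n)$ must appear in the statement. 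There is also a prior issue: Corollary \ref{lemma-lazard} and the grading give the Engel identity $[x,_ny]=0$ only on \emph{homogeneous} elements of $L(P)$; to have it on all elements one must linearize, which is available only when the characteristic is large relative to $n$. What actually powers Burns and Medvedev's proof is a different, stronger theorem of Zelmanov --- global nilpotency, of $n$-bounded class, of $n$-Engel Lie algebras over fields of characteristic zero or sufficiently large characteristic --- applied to the Lie ring of the lower central series, not the local Theorem \ref{1}.

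Second, your large-prime transfer step is unsound as stated. Nilpotency of class $c$ of the dimension-series algebra yields only $\gamma_{c+1}(P)\le D_{c+2}$, and for $p\ge c+2$ one has $P^p\le D_{c+2}$, so this inclusion collapses nothing; contrary to your parenthetical claim, the dimension series does \emph{not} agree with the lower central series in low degrees for large $p$ (already $D_2=\gamma_2(P)P^p$). Concretely, for every odd prime $p$ the nonabelian group $P$ of order $p^3$ and exponent $p^2$ (a $2$-Engel group) has $D_2=D_3=\cdots=D_p=\Phi(P)$ and $L(P)$ abelian, while $P$ has class $2$: the class of $L(P)$ does not control the class of $P$, however large $p$ is, and the Lazard correspondence applies only to groups already known to have class $<p$, which is what you are trying to prove. (The large-prime conclusion itself is true, but the sound route is the lower-central-series Lie ring --- where class $c$ of the graded ring trivially forces $\gamma_{c+1}(P)=1$ in a finite $p$-group --- fed by the global Engel theorem above.) Finally, for small $p$ you concede the crux yourself: the powerful/$p$-adic analytic route you gesture at bounds the index of a powerful subgroup in terms of $p$, the class of $L_p(P)$ \emph{and the number of generators of $P$}, which is unbounded here, so it manufactures no uniform exponent $e(n)$; your closing paragraph labels this ``the main obstacle'' without overcoming it. Reducing correctly to the finite case and locating the difficulty accurately are worth something, but both essential steps of the theorem are left unproved.
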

	
	We are now ready to embark on the proof of our main result.
	
	\begin{proof}[{\bf Proof of Theorem \ref{BM}}]
		For any positive integer $ n $ let $ s(n)$ and $c(n) $ be as in Lemma \ref{AG} and Theorem \ref{BM-thm}, respectively. Set $ f(n)=\max\{s(n),c(n)\} $.
		Since $ G $ satisfies the identity $ [x,_ny^q]\equiv 1 $ we can deduce from \cite[Theorem A]{BSTT} that $ H=G^q $ is locally nilpotent. According to Lemma \ref{HP}, $ G/H $ is locally graded. By Zelmanov's solution of the Restricted Burnside Problem \cite{Z1,Z0,Z16}, locally graded groups of finite exponent are locally finite (see for example \cite[Theorem 1]{M}), and so $ G/H $ is finite. Thus $ H $ is finitely generated and so it is nilpotent. 
		
		By Lemma \ref{lemma-Pavel},  we can assume that $ H $ is residually (finite $ p $-group) for some prime $ p $. If $ p $ does not divides $ q $, then $ H $ is finitely generated  residually finite $ n $-Engel group. By Theorem \ref{BM-thm}, $ H $ contains a nilpotent normal subgroup $ N $ of  class at most $ f(n) $ such that the quotient group $ G/N $ has exponent dividing $ e(n) $. Thus, we can see that $ G/N $ is finite. Thereby, in what follows we can assume that $ H $ is residually (finite $p$-group), where $ p $ divides $ q $.
		
		Set $ H=\langle h_1,\ldots,h_t\rangle. $ Let $ L=L_p(H) $ be the Lie algebra associated with the $ p $-dimensional central series of $ H $. 
		Then $L$ is generated by $\tilde{h}_i=h_i D_2$, $i=1,2,\dots,t$. Let $\tilde{h}$ be any Lie-commutator in $\tilde{h}_i$ and $h$ be the group-commutator in $h_i$ having the same system of brackets as $\tilde{h}$. 
		Since for any group commutator $h$ in $h_1\dots,h_t$ we have that $h^q$ is $n$-Engel, Corollary \ref{lemma-lazard} shows that any Lie commutator in $\tilde h_1\dots,\tilde h_t$ is ad-nilpotent. 
		Since $H$ satisfies the identity $ [x,_ny^q]\equiv 1 $, by Theorem \ref{identity}, $L$ satisfies some non-trivial polynomial identity. According to Theorem \ref{1} $L$ is nilpotent. 
		
		Let $\hat{H}$ be the pro-$p$ completion of $H$, that is, the inverse limit of all quotients of $ H $ which are finite $ p $-groups. Notice that $\hat{H}$ is finitely generated, being $ H $ finitely generated. 
		Since the finite $ p $-quotients of $ H $ are the same as the finite $ p $-quotients of $ \hat{H} $ by (a) and (d) of \cite[Proposition 3.2.2]{Zale}, we get that $L_p(\hat{H})=L$. Hence, $L_p(\hat{H})$  is nilpotent and so,  $\hat{H}$ is a $p$-adic analytic group by Theorem \ref{3}.  
		By  \cite[1.(a) and 1.(o) in Interlude A]{GA}, $\hat{H}$ is virtually powerful, that is, $ H $ has a powerful subgroup $ K $ of finite index.
		By Lemma \ref{main-powerful},  $ K^q $ has class at most $ f(n). $ Furthermore, it follows from \cite[Theorem 1]{Z-torsion} that group $K/K^q $ is finite. Finally, since $ H $ is residually-$ p $, it embeds in $ \hat{H}. $ Thus, $H\cap K^q$ is a nilpotent subgroup of finite index in $ G $ of class at most $ f(n) $. This completes the proof.
	\end{proof}

	\begin{proof}[{\bf Proof of Corollary \ref{cor}}]
		Let $ f(n) $ be as in Theorem \ref{BM}. It follows from \cite[Theorem C]{BSTT} that $ H=G^q  $ is locally nilpotent. By Lemma \ref{HP}, $ G/H $ is a locally graded group. By Zelmanov's solution of the Restricted Burnside Problem, locally graded groups of finite exponent are locally finite. Thus, $ G/H $ is finite and so, $ H $ is a finitely generated nilpotent group. Since polycyclic groups are residually finite \cite[5.4.17]{Rob}, we can deduce from Theorem \ref{BM} that $ H $ contains a subgroup of finite index and of class at most $ f(n) $. The proof is  complete.	
	\end{proof}

\end{document}